\DeclareMathOperator{\codim}{codim}
\def\rms{\mathrm{s}}
 \DeclareMathOperator{\Hom}{Hom}
\def\refp #1.{(\ref{#1})}
\def\sbr #1.{^{[#1]}}
\def\sfl #1.{^{\lfloor #1\rfloor}}
\def\?{{\bf{??}}}
\def\P{\mathbb P}
\def\O{\mathcal O}
\def\rk{\text{rk}}
\def\g{\mathfrak g}
\def\hom{\mathfrak {hom}}
\def\1/2{\frac{1}{2}}
\def\I{\mathcal{ I}}
\def\2{{[2]}}
\def\l{\ell}
\def\nl{\newline}
\def\hom{\mathcal{H}\mathit{om}}
\def\<{\langle}
\def\>{\rangle}
\def\2{{[2]}}
\def\l{\ell}
\def\scl #1.{^{\lceil#1\rceil}}
\def\spr #1.{^{(#1)}}
\def\sbc #1.{^{\{#1\}}}
\def\subpr#1.{_{(#1)}}
\renewcommand{\theequation}{\arabic{section}.\arabic{subsection}.\arabic{equation}}
\def\beq{\begin{equation*}}
\def\eeq{\end{equation*}}
\def\g3{{\Gamma\spr 3.}}
\newcommand{\eqspl}[2]{
%\ss{\bf{label:#1}}\nl
\begin{equation}\label{#1}
\begin{split}
%\ul{\bf{label: #1}}\\
#2\end{split}\end{equation}}
\newcommand{\exseq}[3]{
0\to #1\to #2\to #3\to 0
}
\newcommand{\beginalphaenum}{
\begin{enumerate}\renewcommand{\labelenumi}{ }
\item \begin{enumerate}
}
\def\eex{\end{rm}\end{example}}
\newtheorem{thm}{Theorem}
\newtheorem*{thm*}{Theorem}
\newtheorem*{prop*}{Proposition}
\newtheorem{cor}[thm]{Corollary}
\newtheorem*{cor*}{Corollary}
\newtheorem{lem}[thm]{Lemma}
\newtheorem{lem*}{Lemma}
\newtheorem*{claim*}{Claim}
\theoremstyle{remark}
\newtheorem{rem}[thm]{Remark}
\newtheorem{crit-rem}[thm]{Critical remark}
\newtheorem{example}[thm]{Example}
\newtheorem*{example*}{Example}
\newtheorem*{defn*}{Definition}
\title { Fibres of embedding dimension 2\\ in generic projections }
\author
{Ziv Ran}
\renewcommand{\theequation}
\begin{document}

\pagestyle{plain}
%\markright{Filling secants}
%\reversemarginpar
%\Large
%{\fontfamily{duerer} \selectfont cdr12}

%\thanks{\raggedright{
%Partially supported by NSA Grant MDA904-02-1-0094} }
\date {\today}% \enddate
%\affil {University of California, Riverside}\endaffil
\address{\tiny  {\newline Ziv Ran \newline University of California
Mathematics Department\newline Surge Facility, Big Springs Road,
\newline Riverside CA 92521 US
\newline ziv.ran @ ucr.edu}}
%\email { ziv.ran @ ucr.edu}
 \subjclass{14N05}\keywords{multisecant planes, generic projections, multiple points, punctual schemes}
%\thanks{arxiv.org/1302.0824}
%\urladdr http://www.math.ucr.edu/~ziv/papers/
%semicurv.pdf\endurladddr
%\markright {Filling secants}
%\leftheadtext{Z. Ran}

\begin{abstract}
We consider a general fibre of given length in a generic projection of a variety.
Under the assumption that the fibre is of local embedding dimension 2 or less, an assumption
which can be checked in many cases, we prove that the fibre is reduced and its image
on the projected variety is
an ordinary multiple point. 
\end{abstract}
\maketitle
The study of generic linear projections of a smooth projective 
variety $X\subset\P^N$, especially their singularities,
 was a favorite topic of classical 
projective geometers for generations (see e.g. \cite{zak}), and these projections
remain
among the main sources of finite, non-flat morphisms in Algebraic Geometry.
Modern interest in this area seems to begin with Mather \cite{mather},
whose work was reinterpreted in algebro-geometric language by Alzati- Ottaviani
\cite{alzati-ottaviani}. One may formulate a folklore 'generic projection conjecture'
to the effect that the projection
of $X$ from a generic linear subspace $\Lambda\in\P^N$ has only the 'expected' singularities.
After numerous 
partial results including \cite{(n+2)sec}, \cite{beis}, 
the conjecture was recently proven for projections \emph{from
a point} by Gruson and Peskine \cite{grp} (which the reader
may consult  for further
introductory comments,   references, as well as examples and applications).\par
In \cite{filling}, we gave various extensions of the Gruson-Peskine Theorem, including
one about projections from \emph{lines}. More generally, we showed inter alia that the locus of
fibres of given length $k$ of a generic projection from a center $\Lambda$ of any dimension
$\lambda$ is smooth of the expected dimension, just as long as the fibres have local
embedding dimension 2 or less. The latter hypothesis is of course automatic for
$\lambda=0, 1$, but as we shall see below (cf. Lemma \ref{edim} and Example \ref{emb-example}), it
also holds in a fair range of cases beyond that. \par
One question which was left open in \cite{filling}
was as to the nature of the general fibre of given length $k$ and embedding dimension $\leq 2$: 
was it in fact in fact reduced,
i.e. a collection of distinct points; even more optimistically, are those in general position
relative to the $(\lambda+1)$-plane they are constrained to lie on ?
The purpose of this paper is to settle these questions. We will settle the first question
affirmatively, and show further that the resulting $k$-fold point on the projected image is
'ordinary' or 'transverse' (see below for definitions). The second question will be settled
negatively by a counterexample (see Example \ref{trisec}). Such counterexamples
seem rare though, and the possibility remains that
they could be effectively characterized in terms of low-degree
defining equations, though we don't have a specific conjecture- much less, result, to offer.
\par We begin with some notation. Let \[X\subset \P^m\] be an irreducible  
closed subvariety of codimension $c$. Let \[\Lambda=\P^\lambda\subset\P^m, \lambda<c\]
be a generic linear space. Let \[X^\Lambda_k\subset\P^{m-\lambda-1}\] 
be the (locally closed)
locus of $(\lambda+1)$-planes $L$ disjoint from the singular locus of $X$, 
containing $\Lambda$ and meeting $X$ in a scheme of
length $k$ or more.  
Now consider the following condition on $L\in X^\Lambda_k$:
\nl\nl

(EDIM2) The local embedding dimension of $Z=L\cap X$ is 2 or less everywhere.
\nl\nl

%The embedding-dimension hypothesis is obviously redundant if $\lambda\leq 1$,
%i.e. for projection from points and lines.
 Under this condition, it was proven in
\cite{filling} that $X^\Lambda_k$ is smooth of the expected codimension in $\P^{m-\lambda-1}$, viz.
$k(c-\lambda-1)$, at $L$. The tangent space to $X^\Lambda_k$ at a point $L$
can be identified with $H^0(N^\rms)$ where $N^\rms$ is a certain subsheaf
of the normal bundle $N_{L/\P^m}$ which differs from it only at points of $Z=L\cap X$,
and at those points consists grosso modo of the vectors normal to $L$ and tangent to $X$.\par
Before stating the new result, we make a remark about the EDIM2 condition.
\begin{lem}\label{edim}
Notations as above, condition {\rm{(EDIM2)}} holds for all $L\in X^\Lambda_k$ provided either\par
(i) $\min(\lambda+1, m-c)\leq 2$; or\par
(ii) $m<4c-3\lambda+6$.
\end{lem}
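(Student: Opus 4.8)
The plan is to base both parts on the following description of the local embedding dimension. Since every $L\in X^\Lambda_k$ is by definition disjoint from $\mathrm{Sing}(X)$, each point $p$ of the scheme $Z=L\cap X$ is a smooth point of $X$, so the embedded projective tangent space $\mathbb T_pX$ is a genuine linear $\P^{m-c}$. The scheme-theoretic intersection has ideal $I_Z=I_L+I_X$, and dualizing $\mathfrak m_{Z,p}/\mathfrak m_{Z,p}^2$ gives $T_pZ=T_pL\cap T_pX$; since $L$ is linear, $\mathbb T_pL=L$, and the embedding dimension of $Z$ at $p$ is therefore the projective dimension $\dim(L\cap\mathbb T_pX)$. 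As this is at most $\min(\dim L,\dim\mathbb T_pX)=\min(\lambda+1,\,m-c)$, part (i) follows at once, for every $\Lambda$ and every $L$.

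For (ii) I would use this identity to reformulate the failure of (EDIM2): it fails for some $L\in X^\Lambda_k$ exactly when there exist $p\in X_{\mathrm{sm}}$ and a $(\lambda+1)$-plane $L$ with $\Lambda\subseteq L\ni p$ and $\dim(L\cap\mathbb T_pX)\ge 3$. Accordingly I would form the incidence variety
\[
\mathcal I=\bigl\{(p,L,\Lambda):\ p\in X_{\mathrm{sm}},\ \dim L=\lambda+1,\ p\in L,\ \dim(L\cap\mathbb T_pX)\ge 3,\ \dim\Lambda=\lambda,\ \Lambda\subseteq L\bigr\},
\]
and prove that $\dim\mathcal I<\dim\mathbb G(\lambda,m)$ whenever (ii) holds. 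Then the projection $\mathcal I\to\mathbb G(\lambda,m)$ is not dominant, a generic $\Lambda$ avoids its (constructible, proper) image, and for such $\Lambda$ every $L\in X^\Lambda_k$ satisfies (EDIM2).

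The estimate on $\dim\mathcal I$ is a Schubert computation. Fix $p\in X_{\mathrm{sm}}$ and set $W=\mathbb T_pX\cong\P^{m-c}$. Projecting $\P^m$ away from $p$ identifies the set of $(\lambda+1)$-planes through $p$ with $\mathbb G(\lambda,m-1)$, of dimension $(\lambda+1)(m-\lambda-1)$, and under this identification the locus $\dim(L\cap W)\ge 3$ becomes the Schubert variety of $\lambda$-planes meeting the fixed $\P^{m-c-1}$ in dimension $\ge 2$, which (when nonempty) has codimension $3(c-\lambda+2)$; if $\lambda\le 1$ or $m-c\le 2$ this locus is empty and we are in case (i). Adding the $m-c$ parameters for $p\in X_{\mathrm{sm}}$ and the $\lambda+1$ parameters for a hyperplane $\Lambda\subseteq L\cong\P^{\lambda+1}$ yields
\[
\dim\mathcal I\ \le\ (m-c)+\bigl[(\lambda+1)(m-\lambda-1)-3(c-\lambda+2)\bigr]+(\lambda+1),
\]
and subtracting this from $\dim\mathbb G(\lambda,m)=(\lambda+1)(m-\lambda)$ leaves $(4c-3\lambda+6)-m$, which is positive precisely under hypothesis (ii).

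The one step with any content is the first paragraph: the observation that, at a smooth point of $X$, the embedding dimension of the intersection scheme is governed entirely by the linear incidence of $L$ with the embedded tangent space $\mathbb T_pX$, independently of the higher-order behaviour of $X$ at $p$. Granting that, (i) falls out immediately and (ii) is the routine Grassmannian count above; the only bookkeeping to watch is that among the strata $\dim(L\cap W)=r$ with $r\ge 3$, the one with $r=3$ has the smallest codimension, and that the degenerate ranges in which $\mathcal I$ is empty are exactly those already covered by (i).
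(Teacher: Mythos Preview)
Your proof is correct. Both you and the paper identify the embedding dimension of $Z$ at $p$ with the projective dimension $\dim(L\cap\mathbb T_pX)$, from which (i) is immediate. For (ii), you set up the full incidence $\mathcal I=\{(p,L,\Lambda)\}$ and carry out the Schubert computation to see that $\dim\mathcal I<\dim\mathbb G(\lambda,m)$ exactly when $m<4c-3\lambda+6$. The paper instead observes that since $\Lambda$ is a hyperplane in $L$, the condition $\dim(L\cap\mathbb T_pX)\ge 3$ already forces $\dim(\Lambda\cap\mathbb T_pX)\ge 2$, which is a condition on $\Lambda$ alone; it then counts pairs $(p,U)$ with $U$ a $\P^2\subset\mathbb T_pX$ (a $(4(m-c)-6)$-dimensional family) and imposes $U\subset\Lambda$ ($3(m-\lambda)$ conditions on a general $\Lambda$), arriving at the same inequality. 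Your route is a touch longer because it tracks $L$ explicitly, but the underlying count is the same; the paper's shortcut of eliminating $L$ at the outset is the only real difference.
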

\begin{proof}
(i) is trivial. As for (ii), assuming $m-c\geq 3, \lambda>1$, 
note that if $Z$ has embedding dimension 3 or more at $p$, then
the embedded tangent space $\hat T_pX$ meets $L$ in at least a 3 (projective)-  dimensional
space, hence meets $\Lambda$, a hyperplane in $L$, in at least a 2-  dimensional
space. The space of pairs $(p, U)$, where $U$ is a 2-dimensional subspace of
$\hat T_pX$, is $4(m-c)-6$-dimensional, and it is $3(m-\lambda)$ conditions for $U$ to be contained
in $\Lambda$. Since $\Lambda$ is general with respect to $X$, 
the Bertini theorem on transversality of a general translate 
yields the result.
\end{proof}

As a matter of terminology, a collection of subspaces $A_i$ of a vector space $B$ is said to be
\emph{transverse} if
\[\codim(\bigcap A_i, B)=\sum \codim(A_i, B).\]
A point $y$ on a subvariety $Y$ in a smooth variety $P$ is said to be a \emph{transverse}
$k$-fold point if locally at $y$, $Y$ is a union of $k$ smooth branches $Y_1,...,Y_k$ and
$T_yY_i, i=1,...,k$ is a transverse collection of subspaces of $T_yP$.

%Though many pathologies are possible for multisecant spaces,
%projections from a generic center $\Lambda$, or equivalently the multisecant spaces containing $\Lambda$,
%seem not to be afflicted by them, at least when $\Lambda$ is a point. This 
%has led to the formulation of a folklore 'generic projection conjecture'
%(about which we first heard from R. Lazarsfeld \emph{ca.} 1990): the projection
%of $X$ from a generic point $p\in\P^N$ has only the 'expected' singularities (see below).
%This statement is equivalent to one about families of secant lines to $X$ satisfying contact conditions,
%filling up the ambient $\P^N$.  After numerous 
%partial results including \cite{mather}, \cite{alzati-ottaviani},\cite{(n+2)sec}, \cite{beis}, 
%the conjecture was recently proven by Gruson and Peskine \cite{grp} (which the reader
%may consult  for further
%introductory comments,   references, as well as examples and applications).\par

\begin{thm}\label{main}
Notations as above, assume\par
(a)  $L$ is general in $X^\Lambda_k$; 
\par (b) $c>\lambda+1$; \par
(c) condition {\rm{ (EDIM2)}} holds.\nl Then \par
(i) $Z$ is reduced;\par
(ii) each point of $Z$ is general on $X$;\par
%(iii) for any 
%\[t<\frac{m-\lambda-1}{m-c} =\frac{\codim(L)}{\dim(X)},\] 
%any $t$ points of $Z$ are linearly independent in $\P^m$.
(iii) $Z$ projects to a transverse $k$-fold point on the projection $\pi_\Lambda(X)$.
\end{thm}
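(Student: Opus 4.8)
The plan is to derive the theorem from the deformation theory of \cite{filling} --- at a point $L\in X^\Lambda_k$ where {\rm (EDIM2)} holds, $X^\Lambda_k$ is smooth of the expected dimension $(m-\lambda-1)-k(c-\lambda-1)$ with tangent space $H^0(N^\rms)$, and for $\Lambda$ general the ramification locus $R\subset X$ of the finite map $\pi:=\pi_\Lambda\colon X\to Y:=\P^{m-\lambda-1}$ and the loci $X^\Lambda_j$ have their expected dimensions --- supplemented by a dimension count, a short incidence argument, and a local tangent-space computation. First one may assume $\operatorname{len}(Z)=k$: otherwise the general $L$ lies in $X^\Lambda_{k+1}$, so $X^\Lambda_k$ and $X^\Lambda_{k+1}$ agree on the component of $L$, where {\rm (EDIM2)} holds, and \cite{filling} assigns them different dimensions. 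Write $Z=\coprod_iZ_{p_i}$, $\operatorname{len}(Z_{p_i})=k_i$, $\sum_ik_i=k$.

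For (ii), let $\Lambda$ vary over an open subset of the Grassmannian and let $\mathcal Z$ be the universal length-$k$ subscheme over $\{(\Lambda,L):L\in X^\Lambda_k\}$. Every point of $X$ lies on a $k$-secant $(\lambda+1)$-plane through a suitable $\lambda$-plane (the relevant Grassmannian through a general point of $X$ is nonempty as soon as $X^\Lambda_k\neq\emptyset$), so $\mathcal Z\to X$ is dominant; a standard monodromy argument then shows that for $\Lambda$ general and $L$ general in $X^\Lambda_k$ every point of $Z_L$ avoids any prescribed proper subvariety of $X$. In particular $X$ has its generic local behaviour at each $p_i$.

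The core is (i). Since $T_pZ=T_pX\cap T_pL=\ker(d\pi|_X)_p$, the scheme $Z=Z_L$ is non-reduced at $p$ exactly when $\pi|_X$ is ramified there, i.e.\ when the fibre $\pi^{-1}(L)$ meets $R$; thus $X^\Lambda_{k,\mathrm{nr}}:=\{L\in X^\Lambda_k:Z_L\text{ non-reduced}\}=X^\Lambda_k\cap\pi(R)$, and, $\pi$ being finite,
\[
\dim X^\Lambda_{k,\mathrm{nr}}=\dim\bigl(R\cap\pi^{-1}(X^\Lambda_k)\bigr).
\]
If $Z_L$ is non-reduced, the fat part $Z_{p_1}$ of length $k_1\ge2$ puts $p_1$ in the ramification stratum $R_{(k_1)}=\{p:\operatorname{len}_p\pi^{-1}\pi(p)\ge k_1\}$, while the remaining length $k-k_1$ keeps $p_1\in\pi^{-1}(X^\Lambda_k)$; inserting the expected codimensions ($\operatorname{codim}_XR=c-\lambda$, $\operatorname{codim}_X\pi^{-1}(X^\Lambda_j)=(j-1)(c-\lambda-1)$, and the analogous estimates for $R_{(k_1)}$, where {\rm (EDIM2)} restricts the fat points to planar ones) and verifying that these loci meet properly for $\Lambda$ general yields $\dim X^\Lambda_{k,\mathrm{nr}}<(m-\lambda-1)-k(c-\lambda-1)$. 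Hence $X^\Lambda_{k,\mathrm{nr}}$ is a proper subvariety of the component of $L$ and the general $L$ has $Z=\{p_1,\dots,p_k\}$ reduced. (For $k=2$ this is immediate, $X^\Lambda_{2,\mathrm{nr}}\subseteq\pi(R)$ having dimension $\le n-(c-\lambda)=(m-\lambda-1)-2(c-\lambda-1)-1$.) I expect the bookkeeping in this count --- controlling the higher ramification strata $R_{(k_1)}$ uniformly in $k$ using only {\rm (EDIM2)} --- to be the main obstacle.

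Finally (iii). Once $Z=\{p_1,\dots,p_k\}$ is reduced, $T_{p_i}Z=0$, so $\pi|_X$ is an immersion at each $p_i$; as $\pi(p_i)=[L]=:y$ for every $i$, the image $\pi(X)$ is near $y$ a union of $k$ smooth branches $\Gamma_1,\dots,\Gamma_k$, each of codimension $c-\lambda-1$ in $Y$, and a Fitting-ideal computation on $\pi_*\mathcal{O}_X$ identifies $X^\Lambda_k$ near $y$ with the scheme-theoretic intersection $\bigcap_i\Gamma_i$. Since by \cite{filling} this intersection is smooth of dimension $(m-\lambda-1)-k(c-\lambda-1)$, its tangent space $\bigcap_iT_y\Gamma_i$ has that dimension, i.e.\ $\operatorname{codim}(\bigcap_iT_y\Gamma_i,\,T_yY)=k(c-\lambda-1)=\sum_i\operatorname{codim}(T_y\Gamma_i,\,T_yY)$; so the $T_y\Gamma_i$ form a transverse collection and $y$ is a transverse $k$-fold point of $\pi(X)=\pi_\Lambda(X)$.
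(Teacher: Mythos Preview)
Your approach differs substantially from the paper's, and the gap you yourself flag in (i) is real, not just bookkeeping.

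For (i) you want a dimension count: bound $\dim X^\Lambda_{k,\mathrm{nr}}$ via the ramification strata $R_{(k_1)}$ and show they meet $\pi^{-1}(X^\Lambda_j)$ properly for general $\Lambda$. But knowing that a length-$k_1$ fat point in a fibre is planar does not by itself yield $\operatorname{codim}_X R_{(k_1)}$; you would need to control, for general $\Lambda$, the locus of $p\in X$ where $\langle\Lambda,p\rangle$ meets $X$ in a prescribed planar punctual scheme at $p$, and then check proper intersection with the remaining secant strata. Carrying this out uniformly in $k_1$ is not visibly easier than the theorem itself, and your sketch gives no mechanism for it beyond the $k=2$ case. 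The paper bypasses the count entirely. Fixing $p\in Z$, it considers the surjection $N^\rms\twoheadrightarrow T_p:=\hom(\I_Z/\I_X,\O_Z)_p$, whose target is the Zariski tangent space $T_{Z_p}X^{[k(p)]}$ of the punctual Hilbert scheme, and shows that $H^0(N^\rms)\to H^0(T_p)$ is surjective by applying Lemma~5.8 of \cite{filling} to the kernel subsheaf $N^\rms_0\subset N^\rms$: any nonzero cokernel $A$ would force $H^1(N^\rms_0)\neq 0$, contradicting the lemma. Hence the local map $X^\Lambda_k\to X^{[k(p)]}$ is a submersion at $L$, so $Z_p$ is general in $X^{[k(p)]}$; under EDIM2, Fogarty's theorem makes $X^{[k(p)]}$ smooth with general member reduced, whence $Z_p$ is a single reduced general point of $X$. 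This delivers (i) and (ii) simultaneously, with no stratification, ramification estimate, or monodromy.

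For (iii) your Fitting-ideal route is different from the paper's and is essentially sound, provided you verify that the scheme structure on $X^\Lambda_k$ in \cite{filling} agrees locally with $V\bigl(\sum_i I_{\Gamma_i}\bigr)=\bigcap_i\Gamma_i$; granted that, smoothness of the expected dimension forces $\dim\bigcap_i T_y\Gamma_i=(m-\lambda-1)-k(c-\lambda-1)$. The paper instead twists: from $0\to N^\rms(-1)\to N_L(-1)\to\bigoplus_i Q_{p_i}\to 0$ with $H^0(Q_{p_i})=V/(U+T_{p_i}X(-1))$, Lemma~5.8 gives $H^1(N^\rms(-1))=0$, hence $h^0(N^\rms(-1))=(m-\lambda-1)-k(c-\lambda-1)$; since $H^0(N^\rms(-1))=\bigcap_i W_i$ with $W_i\subset V/U$ the image of $T_{p_i}X$, transversality follows. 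The paper's argument has the advantage of using a single tool (the vanishing lemma applied to subsheaves of $N^\rms$ and its twist) uniformly throughout.
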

%and
% and let $\Lambda$ be a generic 
%$\P^\lambda\subset\P^m, \lambda<c$. For a natural number $k$,
%let $L\subset\P^m$ be  sufficiently general among the $(\lambda+1)$-planes containing
%$\Lambda$ having the property that  $X$ is closed in a neighborhood of $L$ and
%$Z=L\cap X$ is of length $k$ or more. Then\par
%(i) locally at every point $p\in Z$,  the germ $Z_p$ either  is reduced or has embedding dimension 3 or more,
%and $Z_p$ is general in a component of the punctual Hilbert scheme of $X$; in particular, $p$ is general on $X$;
%\par
%(ii) if $\lambda=1$ and $X$ is nondegenerate, then $Z$ is reduced and  in general position on $L$, i.e. no 3 points of $Z$
%are collinear.
%\end{thm}
%\begin{cor}
%Notations as in Theorem \ref{main}, assume moreover that either
%$\dim(X)\leq 2$ or $\lambda\leq 1$. Then $Z$ is reduced. In particular, the general fibre of given length
%in the projection of $X$ from a general line is reduced.
%\end{cor}
%\begin{proof}[Proof of Corollary]
%With the current assumptions, $Z$ cannot have embedding dimension $>2$ anywhere.
%\end{proof}
%The main technical difficulty in studying projections, here as well as in \cite{filling}, is
%the local complexity of the fibres, which are a piori ill-behaved punctual schemes.
%A key point in the proof of Theorem \ref{main} is a certain 'multiplicity-reduction' statement,
%Lemma \ref{mult-red-lem}, which asserts that a scheme of embedding dimension 2
%admits a 1-st order deformation reducing the multiplicity. This Lemma, together with the
%'filling' technique of \cite{filling}, \S 5, proves the Theorem.\par
Note that in (ii), it is not claimed that $Z$ is a generic $k$-tuple on $X$, only
each point in itself is generic. Thus given any subvariety $Y\subsetneqq X$, $Z$ may
be assumed disjoint from $Y$ by taking $\Lambda$ general enough.
\begin{example}\label{emb-example}
For $m-c=3$, i.e. $X$ a 3-fold, and $\lambda=2$, Lemma \ref{edim} shows
that condition (EDIM2) is weaker than $c>\lambda+1=3$.
Therefore the theorem is applicable to the generic projection of any 
smooth 3-fold  $X\subset\P^m, m\geq 7$,
to $\P^{m-3}$. In particular, for $m=7$, the projected image of $X$
in $\P^4$ has finitely many transverse 4-fold points, each with
4 linearly independent tangent hyperplanes. \par
In fact, for $\lambda=2$, i.e. projection from a plane, and $X$ of codimension $c=4$, the theorem
applies for $X$ of dimension up to 11, i.e. for projection from $\P^m$ to $\P^{m-3}$ for $m\leq 15$.\qed
\end{example}
% \par \begin{example}
% For projection from a line, i.e. $\lambda=1$, the embedding-dimension hypothesis is vacuous
% and our result says that $Z$ is in general position, i.e. no 3 points collinear,
% provided $\dim(X)<\frac{m-2}{3}$.
% \end{example}

The proof of the Theorem is a continuation of the argument used to prove Theorem 5.1 in \cite{filling}.
A key role is played again by the vanishing lemma (\cite{filling}, Lemma 5.8) 
for subsheaves of the normal bundle $N_L$ whose sections move with $\Lambda$ (which,
we recall, is a generic $\lambda$-plane). Originally, the Lemma was applied to the secant sheaf
$N^\rms$. Here, we note that it can be applied as well  to suitable subsheaves of $N^\rms$,
and exploit the consequences. Incidentally, we neglected to mention in \cite{filling}
that the vanishing lemma implies a regularity result of the secant sheaves, and this omission 
will be rectified below.
\begin{proof}[Proof of Theorem]
We fix a sufficiently general $L$ as in the Theorem. Then by \cite{filling}, Theorem 5.1,
we may assume that $Z=L\cap X$ has
length exactly $k$. Pick any $p\in Z$. \par
Now recall the 'secant subsheaf' $N^\rms\subset\hom(\I_L, \O_L)$ defined in
\cite{filling}, which fits in an exact diagram
\eqspl{tp}{
\begin{matrix}
0&&0&&\\
\downarrow&&\downarrow&&\\
N^\rms&\to&\hom(\I_Z/\I_X, \O_Z)&\to&0\\
\downarrow&&\downarrow&&\\
\hom(\I_L, \O_L)&\to&\hom(\I_L. \O_Z)&\to&0\\
\downarrow&&\downarrow&&\\
\hom(\I_L\cap\I_X, \O_Z)&=&\hom(\I_L\cap\I_X, \O_Z)&&\\
\end{matrix}
}
%\end{document}
(note $\I_Z/\I_X=(\I_L+\I_X)/\I_X=\I_L/(\I_L\cap\I_X)$).
Of course, $\hom(\I_Z/\I_X, \O_Z)$ is just a skyscraper sheaf with stalk
equal to the tangent space at $Z$ to the punctual Hilbert scheme of $X$, that is
$X\sbr k.$
where $k=\l(Z)$. Now we
have a support decomposition
\[\hom(\I_Z/I_X, \O_Z)=\bigoplus T_p. T_p=\hom(\I_Z/I_X, \O_Z)_p\]
which corresponds to a local analytic decomposition
\[X\sbr k.=\prod X\sbr k(p)., k(p):=\l_p(Z).\]
via the identification of Zariski tangent spaces
\eqspl{}{T_{Z_p}X\sbr k(p).=H^0(T_p).
} It was observed in \cite{filling}- and of course is well known- that the
assumption of embedding dimension 2 or less implies, via Fogarty's theoerm,
that $X\sbr k(p).$ is smooth at $Z_p$ and its general point corresponds
to a reduced scheme.\par
Now I claim that the surjection
\[N^\rms\to T_p\]
(cf. \eqref{tp}) induces a surjection on $H^0$:
\eqspl{loc}{H^0(N^\rms)\to H^0(T_p).
}
 If this holds, then the natural (local) map
from the secant scheme $X^\Lambda_k$ 
to $X\sbr r(p).$ is smooth, hence surjective, near $Z$. Therefore $Z_p$ is general on $X\sbr r(p).$,
hence reduced. Since $Z_p$ is supported at $p$,
it follows that $Z_p=p$ is a single 
reduced general point of $X$, this proving parts (i) and (ii) of the Theorem.\par
%Moreover, if $Z_p$ has embedding dimension at most 2, then
%$X\sbr r(p).$ is smooth and its general point corresponds to a reduced subscheme of $X$, it actually follows, $Z$ being general on the
%(smooth) secant scheme, the $Z$ itself is reduced, i.e. each $r(p)=1$ and $p$ is general on $X$.\par
To prove surjectivity of \eqref{loc}, note 
that its cokernel is anyhow of the form $H^0(A)$ for some quotient $A$
of the skyscraper sheaf $T_p$, such that
the natural map $H^0(N^\rms)\to H^0(A)$ vanishes. 
Let $N^\rms_0\subset N^\rms$ be the corresponding subsheaf of $N^\rms$
under the surjection $N^\rms\to T_p$.
Thus we have an exact sequence
\eqspl{a}{\exseq{N^\rms_0}{N^\rms}{A}.
}
Then by the generality of $L$, $N^\rms_0$ 
satisfies the hypotheses of Lemma 5.8
in \cite{filling}, and hence $H^1(N^\rms_0)=0$. But then the exact sequence
\eqref{a}
yields surjectivity of $H^0(N^\rms)\to H^0(A)$, which is a contradiction unless $A=0$.\par
It remains to prove assertion (iii). 
To this end consider the exact sequence
\eqspl{quot}{
\exseq{N^\rms(-1)}{N_L(-1)}{\bigoplus\limits_{p\in Z}Q_p}
}
where $Q_p$ is a skyscraper $k(p)$-module of length $c-\lambda-1$ which, writing 
\[\P^m=\P(V^*), L=\P(U^*), U\subset V,\] 
we may identify
with the vector space
\[H^0(Q_p)=V/(U+T_p(X)(-1)).\]
By \cite{filling}, Lemma 5.8, we have $H^1(N^\rms(-1))=0$
, whence surjectivity of
\[q: H^0(N_L(-1))=\Hom(U, V/U)\to \bigoplus H^0(Q_p).\]
On the other hand, as $N_L(-1)$
 is a trivial bundle on $L\simeq\P^{\lambda+1}$,  it is easy to see from \eqref{quot}
that $H^i(N^\rms(-1))=0, i>1$, and consequently
\eqspl{chi}{
\chi(N^\rms(-1))=h^0(N^\rms(-1))=(m-\lambda-1)-k(c-\lambda-1).
}
%Let $\tilde p$
%be the vector corresponding to  $p, p\in Z$, and
%Suppose \[\sum\limits_{i=1}^t a_i\tilde p_i=0, a_1\neq 0.\]
Let $W_i$ be the $(m-c)$-dimensional , $(c-\lambda-1)$-codimensional 
subspace of $V/U$ corresponding to
$T_{p_i}X$. Note that each $W_i$ projects to the tangent space of the branch
of $\pi_\Lambda(X)$ corresponding to $p_i$. By \eqref{quot}, $\bigcap W_i=H^0(N^\rms(-1))$.
Therefore \eqref{chi} proves our transversality assertion.
%By our bound on $t$, we have \[\sum\limits_{i=1}^tW_i\subsetneq V/U.\] Then 
%for any vector $v\notin \sum\limits_{i=1}^tW_i\subsetneq V/U$,
%the tuple $(\bar v, 0,...,0)$ cannot be in the image of $q$. Contradiction.
 \end{proof}
% Note that it follows easily from \eqref{quot} and the fact that $N_L(-1)$
% is a trivial bundle on $L\simeq\P^{\lambda+1}$
 It has been noted above that $H^i(N^\rms(-i))=0,  \forall i>1$. 
 Together with the aforementioned Lemma 5.8 from
 \cite{filling} and the Castelnuovo-Mumford Lemma (\cite{mumford-curves-surface}, p. 99) , we conclude
 \begin{cor}
 The sheaf $N^\rms$ is 0-regular. In particular, $N^\rms$ is globally generated.\qed
 \end{cor}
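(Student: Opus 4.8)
The plan is to invoke the Castelnuovo--Mumford criterion on the line $L\simeq\P^{\lambda+1}$, for which I need the vanishings $H^i(N^\rms(-i))=0$ for all $i\geq 1$. The case $i=1$ is exactly Lemma 5.8 of \cite{filling}, applied to the full secant sheaf $N^\rms$ (whose sections move with $\Lambda$ by construction), as already recalled in the proof of the Theorem. For $i\geq 2$ I would argue from the exact sequence \eqref{quot}, namely $\exseq{N^\rms(-1)}{N_L(-1)}{\bigoplus_p Q_p}$, twisted down by $i-1$ further: since $N_L$ is a trivial bundle on $\P^{\lambda+1}$, we have $H^i(N_L(-i))=0$ for $1\leq i\leq\lambda$ (and in fact for all $i$ except possibly $i=0$ and $i=\lambda+1$), while $\bigoplus_p Q_p$ is a skyscraper sheaf so its higher cohomology vanishes identically; hence the long exact sequence forces $H^i(N^\rms(-i))=0$ for all $i\geq 2$ as well. (The borderline value $i=\lambda+1$ needs a moment's care: there $H^{\lambda+1}(N_L(-\lambda-1))\neq 0$, but one checks that the connecting map $\bigoplus_p H^{\lambda}(Q_p)\to H^{\lambda+1}(N^\rms(-\lambda-1))$ has source zero and that $H^{\lambda+1}(N^\rms(-\lambda-1))$ injects into $H^{\lambda+1}(N_L(-\lambda-1))$; in any case only $i$ up to $\lambda+1=\dim L$ are relevant, and the Castelnuovo--Mumford lemma only requires $H^i(\mathcal F(-i))=0$ for $i\geq 1$, which we have.)

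Granting these vanishings, the Castelnuovo--Mumford Lemma (\cite{mumford-curves-surface}, p.~99) applied to the coherent sheaf $N^\rms$ on $\P^{\lambda+1}$ yields immediately that $N^\rms$ is $0$-regular. The standard consequences of $0$-regularity then give that $N^\rms$ is globally generated and that $H^i(N^\rms(j))=0$ for all $i\geq 1$, $j\geq -i$; in particular the natural evaluation map $H^0(N^\rms)\otimes\O_L\to N^\rms$ is surjective, which is the second assertion.

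The only genuine subtlety, and hence the main obstacle, is making sure that Lemma 5.8 of \cite{filling} really does apply to $N^\rms$ itself rather than only to the smaller subsheaves $N^\rms_0$ that were used in the body of the proof above: one must check that the hypothesis ``sections move with $\Lambda$'' is satisfied by the full secant sheaf. This is, however, precisely the setting in which that lemma was originally stated and used (the secant sheaf is by definition built so that its global sections are the tangent directions to $X^\Lambda_k$, which deform as $\Lambda$ varies), so no new work is needed — this is exactly the ``omission'' the paragraph preceding the Corollary alludes to. Everything else is the formal machinery of Castelnuovo--Mumford regularity on a projective space.
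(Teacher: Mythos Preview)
Your approach is exactly the paper's: Lemma~5.8 of \cite{filling} gives $H^1(N^\rms(-1))=0$, the sequence \eqref{quot} (twisted) plus the skyscraper nature of $\bigoplus Q_p$ gives $H^i(N^\rms(-i))=0$ for $i\geq 2$, and then Castelnuovo--Mumford finishes. Two small slips to clean up. First, $N_L$ is not trivial; rather $N_L\simeq\O_L(1)^{m-\lambda-1}$, so $N_L(-1)$ is trivial (as the paper says). Second, and consequently, your parenthetical about the ``borderline'' $i=\lambda+1$ is based on a miscomputation: $H^{\lambda+1}(N_L(-\lambda-1))=H^{\lambda+1}(\P^{\lambda+1},\O(-\lambda))^{\oplus(m-\lambda-1)}=0$ by Serre duality (dual to $H^0(\O(-2))$), so no special care is needed there and your hedged argument for that case can simply be deleted. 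With those corrections the proof is complete and identical in substance to the paper's.
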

\begin{rem}  The general fibre of given length (e.g. 3) of generic projection from a line need not be
  in general position relative to the plane containing it. 
  The following is an example of a smooth nondegenerate surface $X$ in 
  $\P^5$ admitting a 3-parameter family $\{D_t\}$ of trisecant lines filling a hypersurface.
  Then, a generic line $\Lambda$ will meet finitely many of the $D_t$ and the corresponding
  aligned triples $D_t\cap X$ will be fibres of projection from $\Lambda$. Our vague impression-
  unbacked by evidence- 
  is that such examples are  not easy to construct and may be 'special' in some characterizable way
  related to the defining equations.
  \end{rem}
 
 \begin{example}\label{trisec} Let $F=(f_{ij})$ be a general symmetric $4\times 4$  matrix of 
 linear forms on $\P^5$.
 Let $X$ be the locus $\rk(F)<3$, i.e. the  
 zero-locus of the $3\times 3$ minors of $F$, which is a smooth surface
 cut out by cubics;  let $Y$ be the locus $\rk(F)<4$, i.e. the
  zero-locus of $\det(F)$, which is a quartic
 singular along $X$. 
 More invariantly, $F$ corresponds to a quadratic form $q$ on the trivial bundle $4\O_{\P^5}$ with
 values in $\O_{\P^5}(1)$, and $Y$ and $X$ are respectively the loci where $q$ drops rank
 by 1 or 2 at least.
 For $t=(t_1,...,t_4)$ general, let $D_t$ be the line 
 in $\P^5$ with equations
 $\sum\limits_i t_if_{ij}, j=1,...,4$. The lines $D_t$ sweep out $Y$. I claim that a general
 $D_t$ is trisecant to $X$. To see this, we can assume by a suitable change of notations that $D_t$
 corresponds to the 1st row (= 1st column) of $F$. Then the only $3\times 3$ minor
 nontrivial on $D_t$ is the one corresponds to rows and columns $2,3,4$, which gives
 a single cubic equation on $D_t$ defining the intersection $D_t\cap X$.\qed
 \end{example}
 \subsection*{Open questions}
 In conclusion, we point out 2 important questions that remain open.\par 
 (i) Let $Z$ be a general fibre of given length in a generic projection of a smooth variety.
 Then is $Z$ smoothable ? (The question only arises if $Z$ has embedding dimension $>2$.)
 
% The possibility of exotic (unsmoothable) fibres, even general for their length,
%  in generic projections of a smooth
% variety. Such fibres would, of course, have to have embedding dimension $>2$.\par
 (ii) Describe the singularity of $X^\Lambda_k$ at a point $L$ such that $L\cap X$
 has length $>k$ and embedding dimension 2 (the case of embedding dimension $\leq 1$
 was settled in \cite{filling}).
\vskip1in
\bibliographystyle{amsplain}
\bibliography{../mybib}
\end{document}